\newtheorem{theorem}{Theorem}[section]
\newtheorem*{thmAAB}{Theorem (Abramovich-Aliprantis-Burkinshaw, 1993)}
\newtheorem*{thmG}{Theorem (Grivaux, 2002)}
\newtheorem*{remark*}{Remark}
\newtheorem{proposition}[theorem]{Proposition}
\newtheorem*{proposition*}{Proposition}
\newtheorem*{proposition-RT}{Proposition (Radjavi-Troitsky, 2008)}
\newtheorem{corollary}[theorem]{Corollary}
\theoremstyle{definition}
\newtheorem{definition}[theorem]{Definition}
\newtheorem{example}[theorem]{Example}
\theoremstyle{remark}
\numberwithin{equation}{section}
\theoremstyle{definition}
\newcommand{\N}{\mathbb{N}}
\newcommand{\al}{\alpha}
\newcommand{\sumn}{\sum_{n=1}^{\infty}}
\newcommand{\summ}{\sum_{m=1}^{\infty}}
\newcommand{\Fi}{\varphi}
\begin{document}

\title[Invariant subspaces for positive operators]{Invariant subspaces for positive operators on\\ Banach spaces with unconditional basis}

\author{Eva A. Gallardo-Guti\'{e}rrez}
\address{Eva A. Gallardo-Guti\'errez \newline Departamento de An\'alisis Matem\'atico y Matem\'atica Aplicada,\newline
Facultad de Matem\'aticas,
\newline Universidad Complutense de
Madrid, \newline
 Plaza de Ciencias N$^{\underbar{\Tiny o}}$ 3, 28040 Madrid,  Spain
 \newline
and Instituto de Ciencias Matem\'aticas (CSIC-UAM-UC3M-UCM),
\newline Madrid,  Spain } \email{eva.gallardo@mat.ucm.es}

\author{Javier Gonz\'alez-Do\~na}
\address{Javier Gonz\'alez-Do\~na \newline Departamento de An\'alisis Matem\'atico y Matem\'atica Aplicada,\newline
Facultad de Matem\'aticas,
\newline Universidad Complutense de
Madrid, \newline
 Plaza de Ciencias N$^{\underbar{\Tiny o}}$ 3, 28040 Madrid,  Spain
 \newline
and Instituto de Ciencias Matem\'aticas (CSIC-UAM-UC3M-UCM),
\newline Madrid,  Spain }
\email{javier.gonzalez@icmat.es}

\author{Pedro Tradacete}
\address{Pedro Tradacete \newline  Instituto de Ciencias Matem\'aticas (CSIC-UAM-UC3M-UCM),
\newline C/ Nicol\'as Cabrera, 13–15, \newline
Campus de Cantoblanco UAM,
\newline Madrid 28049,  Spain }
\email{pedro.tradacete@icmat.es}

\thanks{First two authors are partially supported by Plan Nacional  I+D grant no. MTM2016-77710-P, Spain. Second author also acknowledges support of the FPI Grant  PRE 2018-083669. Third author gratefully acknowledges support of Agencia Estatal de Investigaci\'on (AEI) and Fondo Europeo de Desarrollo Regional (FEDER) through grants MTM2016-76808-P (AEI/FEDER, UE) and MTM2016-75196-P (AEI/FEDER, UE).\newline
This work has been partially supported by Grupo UCM 910346 and the ICMAT Severo Ochoa project SEV-2015-0554 of the Ministry of Economy and
Competitiveness of Spain and by the European Regional Development
Fund.}

\subjclass[2010]{Primary 46A40, 46B40, 47B60}

\date{May 2020}

\keywords{Banach lattices, lattice homomorphisms, invariant subspaces, invariant lattices}

\begin{abstract}
We prove that every lattice homomorphism acting on a Banach space $\mathcal{X}$ with the lattice structure given by an unconditional basis has a non-trivial closed invariant subspace. In fact, it has a non-trivial closed invariant ideal, which is no longer true for every positive operator on such a space. Motivated by these later examples, we characterize tridiagonal positive operators without non-trivial closed invariant ideals on $\mathcal{X}$ extending to this context a result of Grivaux on the existence of non-trivial closed invariant subspaces for tridiagonal operators. 
\end{abstract}

\maketitle

\section{Introduction}

Let $\mathcal{X}$ be an infinite dimensional separable (real or complex) Banach space and $\mathcal{E}=\{e_n\}_{n\geq 1}$ a Schauder basis (or  simply a  basis) of $\mathcal{X}$, that is, for  every $x\in \mathcal{X}$, there exists a  unique sequence of  scalars $\{\alpha_n\}$ such that
$$x=\sum_{n=1}^{\infty} \alpha_n e_n.$$
Clearly the basis $\{e_n\}_{n\geq 1}$ gives rise to  a natural closed cone $\mathcal{C}_{\mathcal{E}}$  defined by
$$
\mathcal{C}_{\mathcal{E}}= \left \{x=\sum_{n=1}^{\infty} \alpha_n e_n\, :\;  \alpha_n \geq 0 \mbox{ for each } n = 1, 2,\dots \right \}.
$$
Observe that $\mathcal{C}_{\mathcal{E}}$ satisfies trivially the properties of a cone, namely, $\mathcal{C}_{\mathcal{E}} + \mathcal{C}_{\mathcal{E}} \subseteq  \mathcal{C}_{\mathcal{E}}$, $a\, \mathcal{C}_{\mathcal{E}} \subseteq  \mathcal{C}_{\mathcal{E}}$ for each real $a>0$, and
$\mathcal{C}_{\mathcal{E}}\cap \left(-\mathcal{C}_{\mathcal{E}}\right )=\{0\}$.

In addition, it is well known that every cone $C$ in a Banach space $\mathcal{X}$ determines a partial order $\leq $ by letting $y \leq x$  whenever
$x-y\in C$.  The elements of $C$ are known as positive vectors and the pair $(\mathcal{X}, C)$ is an ordered Banach space.

The order structure plays an important role when linear operators acts on $\mathcal{X}$. At this regards, recall that a linear operator $T$ on $\mathcal{X}$  is said to be \emph{positive} respect to the basis $\mathcal{E}=\{e_n\}_{n\geq 1}$  if $T(\mathcal{C}_{\mathcal{E}})\subset \mathcal{C}_{\mathcal{E}}$, or  equivalently, $Tx\geq 0$  for each $x\geq 0$. By identifying $T$  with the infinite matrix $(a_{ij})_{\{i, j\geq 1\}}$ respect to the basis $\mathcal{E}$, $T$ is a positive operator if and only if $a_{ij}\geq 0$ for every pair $(i,\, j)$. If the
basis $\mathcal{E}=\{e_n\}_{n\geq 1}$ is also unconditional, then every positive operator is automatically
continuous; see \cite[Theorem 12.3]{AB-book}.

Nevertheless, despite of the order structure inherited in the Banach space with an unconditional basis, it is still unknown whether every positive operator (and therefore, a positive continuous operator) has a non-trivial closed invariant subspace. Indeed, motivated by the Invariant Subspace Problem, Abramovich, Aliprantis and Burkinshaw proved in the seminal work  \cite{AAB93} the following result for the classical Banach spaces of sequences $\ell^p$ ($1\leq p<\infty$).

\begin{thmAAB} Let $T$ be a positive operator acting on $\ell^p$, $(1\leq p<\infty)$. Assume there exists a positive operator $S$ in the commutant of $T$ which is locally quasinilpotent at a non-zero positive vector, that is, there exists a positive vector $x_0$ such that
$$
\lim_n \|S^n x_0\|^{1/n}=0.
$$
Then $T$ has a non-trivial complemented closed invariant subspace. Moreover, $T$ has a  non-trivial closed invariant ideal.
\end{thmAAB}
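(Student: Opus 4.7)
In $\ell^p$ with $1\leq p<\infty$, every non-trivial closed ideal has the form $\ell^p(A)=\{x\in\ell^p:x_i=0\text{ for }i\notin A\}$ for some nonempty proper $A\subsetneq\mathbb{N}$, and is automatically complemented in $\ell^p$ by $\ell^p(\mathbb{N}\setminus A)$; moreover, the closed ideal generated by a positive vector $y\neq 0$ is precisely $\ell^p(\mathrm{supp}(y))$. It therefore suffices to produce a non-trivial closed $T$-invariant ideal, and the complemented $T$-invariant subspace will follow for free.

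The first attempt is the ideal built from the forward $T$-orbit of $x_0$. Set $A=\bigcup_{n\geq 0}\mathrm{supp}(T^n x_0)$; since $x_0\neq 0$, this is nonempty. I claim $\ell^p(A)$ is $T$-invariant, for which it suffices to check that $Te_i\in\ell^p(A)$ whenever $i\in A$. If $i\in\mathrm{supp}(T^n x_0)$ then $(T^n x_0)_i\,e_i\leq T^n x_0$ pointwise, and applying $T\geq 0$ gives
$$
Te_i\leq \frac{1}{(T^n x_0)_i}\,T^{n+1}x_0,
$$
so that $\mathrm{supp}(Te_i)\subseteq\mathrm{supp}(T^{n+1}x_0)\subseteq A$. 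Linearity, density of finitely supported vectors in $\ell^p(A)$, and continuity of $T$ (automatic by \cite[Theorem 12.3]{AB-book}) then yield $T(\ell^p(A))\subseteq\ell^p(A)$. If $A\subsetneq\mathbb{N}$ we are done.

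The whole game thus reduces to the case $A=\mathbb{N}$, in which the $T$-orbit of $x_0$ already has full support and the operator $S$ must enter decisively. The strategy I would follow is a positive-operator analogue of the Lomonosov--Hilden technique: the commutation $ST=TS$ guarantees that the algebra generated by $S$ is compatible with closed $T$-invariant ideals, while the local quasinilpotence $\|S^n x_0\|^{1/n}\to 0$ makes series of the form $\sum_{n\geq 0}\alpha_n S^n x_0$ converge for arbitrary subexponentially bounded $(\alpha_n)$, so that $S$ behaves as a quasinilpotent operator when tested against $x_0$. I would combine these two features with a compactness or fixed-point argument on a suitable convex set of normalized positive vectors in order to manufacture a positive $y\neq 0$ whose $T$-orbit has strictly smaller support than $\mathbb{N}$; the construction of the previous paragraph, applied to $y$ in place of $x_0$, would then deliver the desired proper invariant ideal.

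The main obstacle is precisely this last move: converting the scalar growth condition $\|S^n x_0\|^{1/n}\to 0$ into an honest support or compactness statement for the joint dynamics of $T$ and $S$ on $\ell^p$. The easy-case support argument uses only the cone structure of $\ell^p$, whereas here one must simultaneously leverage the commutation $ST=TS$ and the local quasinilpotence of $S$ at $x_0$ to force such a support or compactness property; this is the technical core of the Abramovich--Aliprantis--Burkinshaw theorem.
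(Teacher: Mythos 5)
The paper states this theorem as a quoted result from \cite{AAB93} and gives no proof of it, so your attempt has to be measured against the argument in that reference. Your opening reductions are correct and do match the standard first move: every closed ideal of $\ell^p$ is of the form $\ell^p(A)$ and is complemented by the band projection onto $\ell^p(\mathbb{N}\setminus A)$, and the closed ideal generated by the $T$-orbit of $x_0$ is $T$-invariant (your domination $Te_i\leq (T^nx_0)_i^{-1}\,T^{n+1}x_0$ is fine, and continuity of $T$ is automatic). If that ideal is proper, you are done, and the hypothesis on $S$ is indeed not needed in this case.

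The gap is the entire second half, which you explicitly leave as a ``strategy,'' and the strategy you name is not the right one: a Lomonosov--Hilden compactness or fixed-point argument has no traction here because no compact operator is available (Section 2 of this very paper is about positive operators whose commutant contains no nonzero compact operator). The actual mechanism in Abramovich--Aliprantis--Burkinshaw is order-theoretic. One considers $N_S=\{x\in\ell^p:\lim_n\|S^n|x|\,\|^{1/n}=0\}$; monotonicity of the lattice norm shows this is an order ideal, and $ST=TS$ together with $0\leq S^n|Tx|\leq TS^n|x|$ shows it is $T$-invariant and contains $x_0$, so if its closure is proper one is done. If it is dense, atomicity of $\ell^p$ forces every $e_k$ into $N_S$ (a dense ideal contains some $y$ with $y_k\neq 0$, and then $e_k\leq |y_k|^{-1}|y|$), so $S$ is locally quasinilpotent at each basis vector. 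One then looks at the closed ideals generated by the orbits of the $e_k$ under monomials $T^aS^b$; if all of these are dense, then (using $S\neq 0$, which the hypothesis implicitly requires) one finds a monomial $V=T^aS^{b+1}$ and an index $i_0$ with $(Ve_{i_0})_{i_0}\geq c>0$, whence $\|T^{an}S^{(b+1)n}e_{i_0}\|\gtrsim c^n$, and the bound $\|T^{an}\|\leq\|T\|^{an}$ gives $\limsup_m\|S^me_{i_0}\|^{1/m}>0$, contradicting quasinilpotence at $e_{i_0}$. None of this appears in your writeup; as it stands the hypothesis on $S$ is never used, so the proof is incomplete precisely where the theorem has content.
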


Later on, in \cite{AAB94}, the authors extended previous results on the existence of closed invariant subspaces to positive operators on Banach lattices or operators close to them. Shortly after, in \cite{AAB95}, they extended these results to operators acting on any Banach space $\mathcal{X}$ with a (not
necessarily unconditional) Schauder basis. In particular, they proved that if a
continuous quasinilpotent operator $T$ on $\mathcal{X}$ is positive with respect
to the closed cone generated by a basis, then the operator has a nontrivial closed
invariant subspace. Moreover, if $T$ commutes with a nonzero positive operator
that is quasinilpotent at a nonzero positive vector, then T has a nontrivial closed
invariant subspace.
\medskip

The role played by the (local) quasinilpotent behavior in the aforementioned results seems to be quite relevant, and there have been attempts to extend those results for positive operators (not necessarily (locally) quasinilpotent) by means of considering Lomonosov's theorem \cite{Lomonosov} (we refer to \cite[Chapter 10]{AA-book} for more on the subject and to \cite{Troitsky} for recent results at this regards). Indeed, if an ideal irreducible positive operator $T$, that is, a positive operator $T$ lacking non-trivial invariant ideals, commutes with a compact positive operator $S$, then neither $T$ nor $S$ are locally quasinilpotent at any non-zero vector (see
\cite[Problem 9.2.9]{AA-problembook}, for instance). Nevertheless, it is possible to exhibit a positive operator $T$ in a Banach space with an unconditional basis such that the only compact operator commuting with every non-scalar operator in the commutant of $T$ is the zero operator but still $T$ has non-trivial closed invariant ideals and fails to be quasinilpotent at any non-zero vector $x$ (see Section \ref{sec2}).
\medskip

A closer look at the example in Section \ref{sec2} shows that it is a lattice homomorphism. At this regards, in Section \ref{sec3}, we will show that every lattice homomorphism in any Banach space $\mathcal{X}$ with an unconditional Schauder basis has non-trivial closed invariant subspaces; indeed  non-trivial closed invariant ideals (see Theorem \ref{lattice invariant}). Lattice homomorphisms are positive operators; and clearly they constitute a first class of operators to understand if every positive operator (and therefore, a positive continuous operator) has a non-trivial closed invariant subspace. It is worthy to point out that not every positive operator in a Banach lattice has a non-trivial closed invariant ideal (see Example \ref{Example1}) which sharpens Theorem \ref{lattice invariant} at that regards.
\medskip

Indeed, as we will show, Example \ref{Example1} motivates Section \ref{sec4} in order to study positive operators on $\mathcal{X}$ whose matrix representation respect to $\mathcal{E}$ is tridiagonal. While such operators have non-trivial closed invariant subspaces as Sophie Grivaux proved in \cite{G02}, we will characterize those lacking non-trivial closed invariant ideals.

\section{An example}\label{sec2}

In this section, we exhibit an example of an operator $T$  with non-trivial closed invariant ideals in a Banach space with an unconditional basis such that every non-scalar operator in the commutant of $T$ does not commute with any non-zero compact operator and $T$ is also not locally quasinilpotent at any non-zero vector. This result, based on the ideas in \cite{HNRR}, establishes that either the local quasinilpotent behavior of $T$ or the fact of having a non-zero compact operator in the commutant of $T$ are just sufficient conditions in order to draw the existence of non-trivial closed invariant ideals for positive operators.
\medskip

Let $\beta=\{\beta_n\}_{n\geq 0}$ be a sequence of positive
numbers with $\beta_0=1$ and consider $H^2(\beta)$  the set of all
formal power series $f(z)=\sum_{n=0}^{\infty} a_n z^n$ such that the series
$$
\sum_{n=0}^{\infty} |a_n|^2 \beta_n^2
$$
converges. The \emph{shift operator} $M_z$ defined by
$$\sum_{n=0}^{\infty} a_n z^n \to \sum_{n=0}^{\infty} a_n z^{n+1}$$
is a bounded linear transformation mapping $H^2(\beta)$ into itself under appropriate conditions
on the sequence $\beta$. Indeed, it is well-known that every weighted shift acting on $\ell^2$ is unitarily equivalent to
$M_z$ acting on a suitable $H^2(\beta)$ (for more on the subject, we refer to the classical survey by Shields \cite{Shields}).
\medskip

In \cite{HNRR}, Hadwin, Nordgren, Radjavi and Rosenthal proved that quasi-analytic shifts  (see \cite[p. 103]{Shields} for definition) do not satisfy Lomonosov's hypothesis. In particular, by considering the sequence $\beta_n=\exp\left(n^{1/2}\right)$ the operator $M_z$ acting on $H^2(\beta)$ satisfies that the only compact operator commuting with every non-scalar operator in the commutant $\{M_z\}^\prime$  is the zero operator. We observe that $M_z$ is unitarily equivalent to the weighted shift $W$ on $\ell^2$ associated to the weight sequence $w_n= \exp\left((n+1)^{1/2}- n^{1/2}\right)$, namely
\begin{eqnarray} \label{def W}
W:\ell^2 &\rightarrow &\ell^2 \nonumber\\
 e_n & \rightarrow & w_n e_{n+1},
\end{eqnarray}
where $\{e_n\}_{n\geq 0}$ denotes the canonical basis in $\ell^2$. So, the only compact operator commuting with every non-scalar operator in $\{W\}^\prime$  is the zero operator. Clearly, $W$ has infinitely many invariant closed ideals, and moreover, it satisfies the following

\begin{proposition}\label{prop1}
The weighted shift $W$ acting on $\ell^2$ given by (\ref{def W}) is not quasinilpotent at any non-zero vector $x\in \ell^2$. Moreover, any operator commuting with $W$ is not quasinilpotent at any non-zero vector $x\in \ell^2$.
\end{proposition}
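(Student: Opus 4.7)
The plan is to exploit the unitary equivalence of $W$ with the multiplication operator $M_z$ acting on $H^2(\beta)$ for $\beta_n=\exp(n^{1/2})$, already recalled in the paragraph preceding the proposition, and to reduce both assertions to an elementary coefficient estimate. The arithmetic observation that drives everything is that the partial product of weights telescopes: $W_n:=w_0w_1\cdots w_{n-1}=\exp(n^{1/2})=\beta_n$, and more generally $\prod_{j=n}^{n+k-1} w_j=\exp((n+k)^{1/2}-n^{1/2})$, so that the exponent grows sub-linearly in $k$.

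For the first assertion I would work directly on $\ell^2$. Given $x=\sum_{n\geq 0}a_n e_n\neq 0$, pick the smallest index $n_0$ with $a_{n_0}\neq 0$; the orthogonality of $\{e_n\}$ combined with the telescoping above forces
\begin{equation*}
\|W^{k}x\|^{2}\geq |a_{n_0}|^{2}\exp\bigl(2\bigl((n_0+k)^{1/2}-n_0^{1/2}\bigr)\bigr),
\end{equation*}
and taking $k$-th roots yields $\liminf_{k\to\infty}\|W^{k}x\|^{1/k}\geq 1$, so $W$ is not locally quasinilpotent at $x$.

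For the ``moreover'' clause I would pass to $H^2(\beta)$ via $e_n\mapsto z^n/\beta_n$ and invoke the classical description $\{M_z\}'=\{M_\phi:\phi\text{ a multiplier of }H^2(\beta)\}$. This is the main technical ingredient but is routine: since polynomials are dense in $H^2(\beta)$, every bounded $S$ commuting with $M_z$ is determined by $\phi:=S(1)$, and on polynomials one has $Sp=S(p(M_z)\mathbf{1})=p(M_z)\phi=\phi p$. For non-zero $S$ (equivalently, non-zero $\phi$) write $\phi(z)=c_{k_0}z^{k_0}+c_{k_0+1}z^{k_0+1}+\cdots$ with $c_{k_0}\neq 0$, and for a non-zero $f(z)=a_m z^m+a_{m+1}z^{m+1}+\cdots$ a straightforward induction shows $\phi^n f$ has smallest non-zero Taylor coefficient $c_{k_0}^n a_m$ at index $m+nk_0$. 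Hence
\begin{equation*}
\|S^{n}f\|^{2}=\|\phi^{n}f\|^{2}\geq |c_{k_0}|^{2n}|a_m|^{2}\exp\bigl(2(m+nk_0)^{1/2}\bigr),
\end{equation*}
and taking $n$-th roots together with $(m+nk_0)^{1/2}/n\to 0$ gives $\liminf_{n\to\infty}\|S^{n}f\|^{1/n}\geq |c_{k_0}|>0$.

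The principal step requiring care is the commutant description; as a by-product it shows that every non-zero $S\in\{W\}'$ is injective, since multiplication by a non-zero analytic function on $\mathbb{D}$ is injective and the elements of $H^2(\beta)$ are analytic on $\mathbb{D}$. This ensures iteration does not trivially annihilate $f$. Note that, read strictly, the statement would exclude the zero operator as a degenerate exception; the argument handles every non-zero element of $\{W\}'$, which is clearly the intended content.
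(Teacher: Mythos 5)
Your argument is correct, but it takes a genuinely different route from the paper's. The paper transports everything to $H^2(\beta)$ and invokes local spectral theory: by a result quoted from Laursen--Neumann, $\mathbb{D}\subseteq\sigma_{M_z}(f)$ for every non-zero $f$, and since the local spectral radius dominates $\sup\{|\lambda|:\lambda\in\sigma_{M_z}(f)\}$, one gets $\limsup_n\|M_z^nf\|^{1/n}\geq 1$; the commutant case is handled the same way via $\phi(\mathbb{D})\subseteq\sigma_{M_\phi}(f)$. You instead prove both assertions by elementary lower bounds on a single coefficient: for $W$ the telescoping product of weights gives $\|W^kx\|\geq|a_{n_0}|\exp\bigl((n_0+k)^{1/2}-n_0^{1/2}\bigr)$ directly in $\ell^2$, and for $S=M_\phi$ the lowest-order Taylor coefficient of $\phi^nf$ gives $\liminf_n\|S^nf\|^{1/n}\geq|c_{k_0}|>0$; both computations check out. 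The two proofs share the one nontrivial external ingredient, namely the identification of $\{M_z\}'$ with the multiplication operators, which you sketch via density of polynomials and the paper simply cites from Hadwin--Nordgren--Radjavi--Rosenthal. What the paper's route buys is brevity and the sharper quantitative conclusion $r_{M_\phi}(f)\geq\sup_{\lambda\in\mathbb{D}}|\phi(\lambda)|$; what yours buys is self-containedness (no local spectral theory at all, and the first assertion never leaves $\ell^2$). Your remark that the literal statement fails for the zero operator, which is quasinilpotent at every vector, applies equally to the paper's own proof and is a harmless degeneracy.
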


\begin{proof}
Since $W$ is unitarily equivalent to $M_z$ acting on $H^2(\beta)$, we will show that $M_z$ is not locally quasinilpotent in  $H^2(\beta)$ where $\beta_n=\exp\left(n^{1/2}\right)$. To that end,  observe that $H^2(\beta)$ consists of analytic functions in the open unit disc $\mathbb{D}$ of the complex plane which, in particular, are continuous on the closure $\overline{\mathbb{D}}$.

First, observe that for every non-zero $f\in H^2(\beta)$, the limit
$$
r_{M_z}(f):=\limsup_n  \left \|M_z^n f\right \|_{H^2(\beta)}^{1/n}
$$
is the local spectral radius of $M_z$ at $f$ in $H^2(\beta)$. Now, recall that the local spectrum  of $M_z$ at $f$ in $H^2(\beta)$, denoted by $\sigma_{M_z}(f)$, is the complement of the
set of complex numbers $\lambda$  such that there exists an open neighborhood $U_\lambda \ni \lambda$ and an analytic function $h:U_\lambda\rightarrow  H^2(\beta)$ for which
\begin{equation*}
(M_z-wI) h(w) = f, \quad \text{for every } w \in U_\lambda.
\end{equation*}
We refer to the monograph by Laursen and Neumann \cite{LAURSEN-NEUMANN_book} for more on local spectrum of operators. In particular, it holds that
$$
\mathbb{D}\subseteq \sigma_{M_z}(f)
$$
for all non-zero $f\in H^2(\beta)$ (see \cite[Proposition 1.6.9]{LAURSEN-NEUMANN_book}, for instance). Since for every linear bounded operator $T$ on a Banach space $X$ the local spectral radius at $x$ satisfies
$$
r_T(x)\geq \{ |\lambda|:\; \lambda \in \sigma_T(x) \},
$$
one deduces that $\limsup_n  \left \|M_z^n f\right \|_{H^2(\beta)}^{1/n}\geq 1$ for every non-zero $f\in H^2(\beta)$. Hence, $M_z$ is not quasinilpotent at any non-zero vector $f\in H^2(\beta)$.

\medskip

Note that the last statement of Proposition \ref{prop1} follows since every operator in the commutant of $M_z$ in $H^2(\beta)$ is
a multiplication operator $M_{\phi}$ where $\phi \in H^2(\beta)$ (see \cite{HNRR}). The proof is then completed by arguing, in a similar way, since the local spectral radius at any vector satisfies $\phi(\mathbb{D})\subseteq \sigma_{M_\phi}(f)$.
\end{proof}

\section{Invariant subspaces for lattice homomorphisms}\label{sec3}

Throughout this section, $\mathcal{X}$ will denote an infinite dimensional separable (real or complex) Banach space and $\mathcal{E}=\{e_n\}_{n\geq 1}$ an unconditional basis in $\mathcal{X}$. We consider the order induced by the basis $\mathcal{E}$ as mentioned in the Introduction.

Following the standard lattice notation, the supremum (least upper bound) and the infimum (greatest lower bound) of a pair of vectors  $x,\, y \in \mathcal{X}$ will be denoted by $x\wedge y$ and $x\vee y$ respectively, namely
$$x\vee y=\sup \{x, y\} \mbox{ and } x\wedge y=\inf \{x, y\}.$$

Recall that for $x$ in a vector lattice, its \emph{positive part}, its \emph{negative part} and its \emph{absolute value} are defined by:
$$
x^+=x\vee 0, \; \;  x^-=(-x)\vee 0, \; \; \mbox{ and } |x|=x\vee (-x),
$$
respectively. A sublattice of a vector lattice is a subspace which is also closed under the lattice operations (that is, for each $x,y$ in a sublattice, $x\vee y$ and $x\wedge y$ also belong to the sublattice). An ideal $M$ in a vector lattice is a subspace such that for every $x,\, y$ such that if $|x|\leq |y|$ and $y\in M$ then $x\in M$. Recall that a norm in a vector lattice is a lattice norm if it satisfies $\|x\|\leq \|y\|$ whenever $|x|\leq |y|$. Thus, a Banach lattice is a complete vector lattice equipped with a lattice norm.

In addition, recall that a linear bounded operator $T$ on a vector lattice $E$ is a \emph{lattice homomorphism} if $T(x\vee y)=Tx\vee Ty$ for all $x,\, y \in E$ (see also  \cite[Theorem 1.34]{AA-book}). Every lattice homomorphism is clearly a positive operator. Finally, a one-to-one lattice homomorphism which is onto is called a \emph{lattice isomorphism}.

A first result identifies lattice isomorphisms acting on Banach spaces $\mathcal{X}$ (real or complex) with an unconditional basis.

\begin{proposition}\label{representacion}
Let $\mathcal{X}$ be a Banach space with an unconditional basis $\mathcal{E}=\{e_n\}_{n\geq 1}$ and $T$ a lattice isomorphism. Then, there exists a permutation $\xi : \N \rightarrow \N$ and a sequence of positive numbers $\{w_n\}_{n\geq 1}$ such that
\begin{equation}\label{permutacion con pesos}
T e_n = w_n  e_{\xi(n)}, \qquad (n\in \N).
\end{equation}
\end{proposition}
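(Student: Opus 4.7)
The plan is to characterize the atoms of the lattice $(\mathcal{X},\mathcal{C}_{\mathcal{E}})$, observe that lattice isomorphisms preserve atoms, and then read off the claimed form of $T$.

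First I would show that the atoms of $(\mathcal{X},\mathcal{C}_{\mathcal{E}})$ are exactly the positive scalar multiples of the basis vectors $e_n$. For this, recall that a non-zero positive vector $a$ is called an atom if $0\leq x\leq a$ forces $x=\lambda a$ for some $\lambda\geq 0$. On the one hand, if $a=e_n$ and $0\leq x\leq e_n$, expanding $x=\sum_k \alpha_k e_k$ shows that both $\alpha_k\geq 0$ and $\delta_{k,n}-\alpha_k\geq 0$ for every $k$ (since the coordinate functionals relative to the basis are exactly the biorthogonal functionals, which are positive with respect to $\mathcal{C}_{\mathcal{E}}$); hence $\alpha_k=0$ for $k\neq n$ and $x=\alpha_n e_n$. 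Conversely, if $a=\sum_k \alpha_k e_k$ is an atom with some $\alpha_n>0$, then $0\leq \alpha_n e_n\leq a$, so the atom condition yields $\alpha_n e_n = \lambda a$ for some $\lambda>0$, and therefore $a$ is already a positive multiple of $e_n$.

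Next I would use that $T$, being a lattice isomorphism, sends atoms to atoms (and similarly for $T^{-1}$). Indeed, $T$ is positive and invertible with $T^{-1}$ positive, so $0\leq x\leq Ta$ is equivalent to $0\leq T^{-1}x\leq a$, whence $T^{-1}x=\lambda a$ and $x=\lambda Ta$. Applying this to $a=e_n$, the previous step forces
\[
T e_n = w_n e_{\xi(n)}
\]
for some $w_n>0$ and some $\xi(n)\in\N$.

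Finally I would check that $\xi:\N\to\N$ is a bijection. Injectivity is immediate: if $\xi(n)=\xi(m)$ then $Te_n$ and $Te_m$ are linearly dependent, and since $T$ is one-to-one so are $e_n$ and $e_m$, forcing $n=m$. Surjectivity follows by applying the same argument to $T^{-1}$: there exist $v_k>0$ and $\eta(k)\in\N$ such that $T^{-1}e_k=v_k e_{\eta(k)}$, and applying $T$ gives $e_k=v_k w_{\eta(k)} e_{\xi(\eta(k))}$, which forces $\xi(\eta(k))=k$.

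I do not anticipate a serious obstacle; the only slightly delicate point is making sure the ``coordinate-wise'' intuition is rigorously justified, which is exactly why I would phrase the argument using the positivity of the biorthogonal functionals rather than by manipulating formal series. Once the atoms have been identified, the proposition follows almost immediately from the fact that $T$ and $T^{-1}$ both preserve them.
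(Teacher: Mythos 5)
Your proof is correct, but it follows a genuinely different route from the paper's. The paper argues via disjointness: since $e_n\wedge e_m=0$ for $n\neq m$, a lattice homomorphism sends distinct basis vectors to vectors with disjoint supports; writing $e_n=T^{-1}Te_n$ as a combination of the pairwise disjoint vectors $T^{-1}e_k$ and invoking unconditionality of the basis, the authors conclude that only one term can survive. You instead identify the atoms of the ordered space $(\mathcal{X},\mathcal{C}_{\mathcal{E}})$ as exactly the positive multiples of the $e_n$ and observe that an order isomorphism permutes atoms. Your argument is cleaner and slightly more general: it never uses the lattice homomorphism identity itself, only that $T$ and $T^{-1}$ are both positive, so it proves the representation for arbitrary order isomorphisms of the cone; it also sidesteps the paper's somewhat terse step where unconditionality is invoked to kill all but one term of the series. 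What the paper's approach buys in exchange is the intermediate fact that lattice homomorphisms map distinct basis vectors to disjointly supported images, which is reused verbatim in Propositions 3.3 and 3.4; your route would need that observation proved separately there. The only point you should make explicit is why $T^{-1}$ is positive: the paper derives this by checking that the inverse of a bijective lattice homomorphism is again a lattice homomorphism (from $x\vee y=T\bigl((T^{-1}x)\vee(T^{-1}y)\bigr)$), and your step ``$0\leq x\leq Ta$ is equivalent to $0\leq T^{-1}x\leq a$'' rests on exactly this standard but non-vacuous fact.
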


\begin{proof}
Let $n, m \in \N$ such that $n\neq m$. First, observe that $e_n \wedge e_m = 0$. Since $T$ is a lattice homomorphism, it follows that $Te_n \wedge Te_m = T(e_n\wedge e_m) = 0.$ Denote $$ Te_n = \sum_{k \in N} \al_{k} e_k, \qquad Te_m = \sum_{k \in M} \beta_k e_k,$$ where $N,M \subset \N$ and $\al_k > 0$ and $\beta_k > 0$.
Since $Te_n$ and $Te_m$ are disjoint elements in $\mathcal{X}$, it follows that $N \cap M = \emptyset.$
Accordingly, every lattice homomorphism in $\mathcal{X}$ maps different basis elements into expansions with disjoint basis elements.

Now, note that $T^{-1}$ is also a lattice homomorphism since for every $x, y \in \mathcal{X}$
$$ x\vee y = T(T^{-1}x) \vee T(T^{-1}y) = T( (T^{-1}x)\vee T^{-1}y),$$ from where it easily yields the claim on $T^{-1}$. Hence,
one deduces
$$e_n = T^{-1}Te_n = \sum_{k \in N} \al_k T^{-1}e_k = \sum_{k \in N} \al_k \left( \sum_{l \in N_k} \gamma_l e_l \right),$$ where $\gamma_l > 0$ and $N_k \subset \N$.

Now, having in mind that the basis is unconditional, one deduces from the series expansion for $e_n$ that there exists $\eta \in \N$ such that $\al_\eta > 0$ and $\al_k = 0$ for every $k \in N,$ $k \neq \eta.$ Denote then by $\xi(n)$ such an  $\eta$ and $w_n = \al_{\xi(n)}.$ Hence
$$Te_n = w_n e_{\xi(n)}.$$
Upon applying such argument to every basis vector, it follows that $\xi : \N \rightarrow \N$ is a permutation since $T$ is bijective. From here the desired statement follows.
\end{proof}

\medskip

With Proposition \ref{representacion} at hand, we may introduce the following class of operators:

\begin{definition}
A linear bounded operator in a Banach space $\mathcal{X}$ with an unconditional basis $\mathcal{E}=\{e_n\}_{n\geq 1}$ will be called a  \textbf{weighted permutation operator} if there exists a permutation $\xi : \N \rightarrow \N$ and a sequence of positive numbers $\{w_n\}_{n\geq 1}$ such that $T e_n = w_n e_{\xi(n)}$ for every $n\in \N$.
\end{definition}

\medskip
Indeed, the previous ideas can be pushed a bit further and prove that injective, dense range lattice homomorphisms are also weighted permutation operators.

\begin{proposition}\label{representacion2}
Let $\mathcal{X}$ be a Banach space with an unconditional basis $\mathcal{E}=\{e_n\}_{n\geq 1}$ and $T$ an injective, dense range lattice homomorphism. Then $T$ is a weighted permutation operator.
\end{proposition}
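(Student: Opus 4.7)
The plan is to mimic the first half of the proof of Proposition \ref{representacion}, but replace the use of $T^{-1}$ (which is not available here) by an argument that extracts coordinate information from the density of the range. The crucial ingredient will be the biorthogonal functionals $\{e_n^*\}_{n\geq 1}$ associated to the unconditional basis, which are continuous and recover coefficients from norm convergence.

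First, exactly as in Proposition \ref{representacion}, from $e_n \wedge e_m = 0$ for $n\neq m$ and the fact that $T$ is a lattice homomorphism, I get $Te_n \wedge Te_m = 0$. Writing each $Te_n$ in its basis expansion, disjointness forces the supports $N_n := \{k\in\N : e_k^*(Te_n) \neq 0\}$ to be pairwise disjoint. Injectivity of $T$ ensures $Te_n \neq 0$, so each $N_n$ is nonempty. At this point I will have $Te_n = \sum_{k\in N_n}\alpha_k^{(n)}e_k$ with $\alpha_k^{(n)}>0$, and the goal reduces to showing that each $N_n$ is a singleton and that $\bigcup_n N_n = \N$.

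For the singleton claim, suppose for contradiction that some $N_n$ contains two distinct indices $j$ and $k$. By density of the range, pick $x_l \in \mathcal{X}$ with $Tx_l \to e_j$. Writing $x_l = \sum_m \beta_m^{(l)} e_m$ and using continuity of $T$ together with the pairwise disjointness of the supports, I can compute
\begin{equation*}
e_j^*(Tx_l) = \beta_n^{(l)}\alpha_j^{(n)}, \qquad e_k^*(Tx_l) = \beta_n^{(l)}\alpha_k^{(n)}.
\end{equation*}
Continuity of $e_j^*$ and $e_k^*$ forces $\beta_n^{(l)}\alpha_j^{(n)} \to 1$ and $\beta_n^{(l)}\alpha_k^{(n)} \to 0$, which contradicts $\alpha_j^{(n)},\alpha_k^{(n)} > 0$. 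An entirely analogous argument shows that every $j\in\N$ must lie in some $N_n$: otherwise $e_j^*(Tx) = 0$ for all $x$, contradicting $Tx_l\to e_j$.

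Writing $N_n = \{\xi(n)\}$, disjointness of the $N_n$'s gives injectivity of $\xi$ and the coverage $\bigcup_n N_n = \N$ gives surjectivity, so $\xi$ is a permutation of $\N$. Setting $w_n = \alpha_{\xi(n)}^{(n)} > 0$ completes the argument. The main delicate point is the passage from norm convergence $Tx_l \to e_j$ to the coordinate identities; this is where unconditionality is used implicitly, since it guarantees that the biorthogonal functionals are continuous and the series $Tx_l = \sum_m \beta_m^{(l)} Te_m$ converges unconditionally, allowing one to read off each coordinate without worrying about the order of summation across the disjoint supports.
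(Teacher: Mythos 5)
Your proof is correct and follows essentially the same route as the paper's: both exploit the pairwise disjointness of the supports $N_n$ (coming from $Te_n\wedge Te_m=0$) together with a sequence $Tx_l\to e_j$ and the coordinate functionals to reach the contradiction that $\beta_n^{(l)}\alpha_j^{(n)}\to 1$ while $\beta_n^{(l)}\alpha_k^{(n)}\to 0$ despite $\alpha_j^{(n)},\alpha_k^{(n)}>0$. The only (welcome) difference is that you explicitly verify surjectivity of $\xi$, i.e.\ that $\bigcup_n N_n=\N$, a point the paper leaves implicit.
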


\begin{proof}
Since $T$ is positive and injective,  for every $n \in \N$ there exist positive real numbers $\al_{n,k}$ and $N_n \subset \N$ such that
 $$ Te_n = \sum_{k \in N_n} \al_{n,k}\, e_k.$$
Assume, by contradiction, that there exists $\ell \in \N$ such that the cardinal of the set $N_{\ell}$, denoted by $|N_{\ell}|$, is strictly bigger than 1.
Let $n_0 \in N_{\ell}$ and write
$$Te_{\ell} =\alpha_{\ell, n_0}\, e_{n_0} + \sum_{k \in N_{\ell}\smallsetminus \{n_0\}} \alpha_{\ell, k}\, e_k.$$

Since the range of $T$ is dense, there exists a sequence $\{x_n\}_{n\geq 1}$ in $\mathcal{X}$ such that $Tx_n \rightarrow e_{n_0}$ in $\mathcal{X}$.
Write $x_n = \summ \beta_{n,m} e_m,$ and denote by $e^*_{n_0}$ the coordinate functional for $n_0 \in \N$. Having into account that $Te_n \wedge Te_m = 0$ for every $n\neq m \in \N$ (as in the proof of Proposition \ref{representacion}), it follows that
\begin{equation*}
e_{n_0}^*(Tx_n) = \beta_{n,\ell}\,  e^*_{n_0}\, (Te_{\ell}) = \beta_{n,\ell}\; \alpha_{\ell, n_0} \rightarrow 1, \qquad \mbox{ as  } n\to \infty.
\end{equation*}

Let $n_1 \in N_{\ell}$, with $n_1\neq n_0$. In a similar way, we have
\begin{equation*}
e_{n_1}^*(Tx_n) = \beta_{n,\ell}\,  e^*_{n_1}\, (Te_{\ell})= \beta_{n,\ell}\; \alpha_{\ell, n_1} \rightarrow 0, \qquad \mbox{ as  } n\to \infty,
\end{equation*}
which yields a contradiction. Hence, $|N_n| = 1$ for every $n \in \N$ and therefore $T$ is a weighted permutation operator.
\end{proof}

Next result will provide a matrix expression for lattice homomorphisms in Banach spaces with unconditional basis.

\begin{proposition}\label{matrix representation}
Let $\mathcal{X}$ be a Banach space with an unconditional basis $\mathcal{E}=\{e_n\}_{n\geq 1}$. Let $T$ be a positive operator on $\mathcal{X}$ and let $A = (a_{n,m})_{n,m \in \N}$ be the infinite positive matrix induced by $T$; namely $$Te_m = \sumn a_{n,m}e_n,$$ where $a_{n,m} \geq 0$ for every $n,m \in \N.$ Then, $T$ is a lattice homomorphism if and only if each row of $A$ has at most one non-zero entry. In addition, $T$ is an injective lattice homomorphism if and only if $A$ has no null columns.
\end{proposition}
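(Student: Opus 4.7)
The plan is to reduce both equivalences to the coordinate-wise description of the lattice operations induced by the unconditional basis. Namely, for $x=\sumn x_n e_n$ in $\mathcal{X}$ one has $|x|=\sumn |x_n|e_n$, and consequently two positive vectors satisfy $x\wedge y=0$ exactly when their basis expansions have disjoint supports. Recalling that a positive operator is a lattice homomorphism precisely when $T|x|=|Tx|$ for every $x$, both equivalences will follow by unpacking these identities in terms of the matrix entries $a_{n,m}$.

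For the necessity in the first assertion, I would use that $e_m\wedge e_{m'}=0$ for every $m\neq m'$: if $T$ is a lattice homomorphism, then $Te_m\wedge Te_{m'}=0$, which means that the basis expansions $\sum_n a_{n,m}e_n$ and $\sum_n a_{n,m'}e_n$ have disjoint supports. This translates into the condition that for each row index $n$ there is at most one column $m$ with $a_{n,m}\neq 0$, as claimed (and mirrors the initial observation in the proof of Proposition \ref{representacion}).

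For sufficiency, assume the row condition and let $m(n)$ denote, for each $n$ such that row $n$ is nonzero, the unique column index with $a_{n,m(n)}\neq 0$. Given $x=\summ x_m e_m$, continuity of $T$ together with the unconditional convergence of the basis expansions allows one to interchange the summation order and write
$$Tx\;=\;\sum_n\Bigl(\sum_m a_{n,m}x_m\Bigr)e_n\;=\;\sum_n a_{n,m(n)}\,x_{m(n)}\,e_n.$$
Taking coordinate-wise absolute values and using that $a_{n,m(n)}\geq 0$ yields $|Tx|=\sum_n a_{n,m(n)}|x_{m(n)}|e_n=T|x|$, proving that $T$ is a lattice homomorphism.

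For the injectivity equivalence, one direction is immediate: if $T$ is injective then $Te_m\neq 0$ for every $m$, so no column of $A$ vanishes. Conversely, once the row condition is in force, the hypothesis that every column is nonzero means that the partial map $n\mapsto m(n)$ sweeps out all of $\N$, and then the explicit formula above shows that $Tx=0$ forces $x_{m(n)}=0$ for every such $n$, i.e., $x=0$. The only delicate point is to justify the coordinate-wise lattice identities and the rearrangement used in the display above, but these are standard consequences of the definition of the cone $\mathcal{C}_{\mathcal{E}}$ and the unconditional convergence of the basis expansions; once these are in place, the rest of the argument reduces to bookkeeping with the matrix entries.
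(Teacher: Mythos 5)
Your proposal is correct and takes essentially the same route as the paper's proof: the necessity argument via $e_m\wedge e_{m'}=0\Rightarrow Te_m\wedge Te_{m'}=0$ and disjointness of supports is identical, and your sufficiency argument through the identity $T|x|=|Tx|$ is just another face of the characterization in \cite[Theorem 1.34]{AA-book} that the paper invokes in the equivalent form of disjointness preservation ($x\wedge y=0\Rightarrow Tx\wedge Ty=0$), both resting on the same coordinatewise description of the lattice operations. Your explicit verification of injectivity from the formula $(Tx)_n=a_{n,m(n)}x_{m(n)}$ together with the surjectivity of $n\mapsto m(n)$ is, if anything, more self-contained than the paper's one-line appeal to Proposition \ref{representacion2}.
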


\begin{proof}
First, let us assume that $T$ is a lattice homomorphism and suppose, by contradiction, that there exists $N \in \N$ and $i \neq j \in \N$ such that $a_{N,i}\cdot a_{N,j} > 0.$ Then, observe that the $N$-th coordinate of the vectors $Te_i$ and $Te_j$ are given by the coefficients $a_{N,i}$ and $a_{N,j}$ respectively. Since $T$ is a lattice homomorphism $Te_i \wedge Te_j=0$, but the $N$-th coordinate of such a vector  is given by $a_{N,i} \wedge a_{N_j} > 0$, which yields a contradiction.

\medskip

Now, assume that each row of $A$ has at most one non-zero entry. Then, it is obvious that $Te_n \wedge Te_m = 0$ for every $n\neq m$. We claim that this property for $T$ is equivalent to be a lattice homomorphism. Having in mind that $T$ is a positive operator, it is enough to show that $x\wedge y = 0$ implies $Tx \wedge Ty = 0$ for every $x, y \in \mathcal{X}$ (see \cite[Theorem 1.34]{AA-book}).

Let us suppose that $x\wedge y = 0$ for $x, y \in \mathcal{X}$. Hence, we may write
$$x = \sum_{n \in N} x_n e_n \;  \mbox{ and  }  y = \sum_{n \in M} y_n e_n,$$ where $N, M \subset \N$ satisfy $N \cap M = 0$ and $x_n,\, y_n \neq 0$. Equivalently, the supports of $x$ and $y$ are disjoint. Since $T$ maps disjoint basis vectors to elements with disjoint supports, $Tx$ and $Ty$ have also disjoint supports. Accordingly, $Tx \wedge Ty = 0$ and therefore, $T$ is a lattice homomorphism.

Last statement of the theorem follows easily from Proposition \ref{representacion2}.
\end{proof}

\begin{remark*}
Observe that as a straightforward application of Theorem \ref{matrix representation} the example of Section \ref{sec2} is a lattice homomorphism. What it is not clear from the matrix representation is the fact that such an operator is not quasinilpotent at any non-zero vector $x\in \ell^2$.
\end{remark*}

The main result of this Section is the following theorem.

\begin{theorem}\label{lattice invariant}
Let $\mathcal{X}$ be a Banach space with an unconditional basis $\mathcal{E}=\{e_n\}_{n\geq 1}$ and $T$ a lattice homomorphism on $\mathcal{X}$. Then $T$ has a non-trivial closed invariant subspace. Moreover, $T$ has a non-trivial closed invariant ideal.
\end{theorem}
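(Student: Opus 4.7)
My plan is to produce a closed $T$-invariant ideal of the form $I_A := \overline{\Span\{e_n : n \in A\}}$ for a suitable proper subset $A \subsetneq \N$. Unconditionality of the basis makes the lattice order coordinate-wise, so every such $I_A$ is a genuine closed ideal; moreover, letting $S_m \subseteq \N$ denote the set of indices appearing in the basis expansion of $Te_m$, the ideal $I_A$ is $T$-invariant exactly when $S_m \subseteq A$ for every $m \in A$. The crucial input is Proposition \ref{matrix representation}: for a lattice homomorphism $T$ the family $\{S_m\}_{m \in \N}$ is pairwise disjoint.

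For each $m_0 \in \N$ I would build the \emph{forward orbit} $O^+(m_0)$ iteratively: set $O_0 := \{m_0\}$, $O_{j+1} := O_j \cup \bigcup_{l \in O_j} S_l$, and $O^+(m_0) := \bigcup_{j \geq 0} O_j$. This is the smallest subset of $\N$ containing $m_0$ and stable under $l \mapsto S_l$, so $I_{O^+(m_0)}$ is a closed $T$-invariant ideal containing $e_{m_0}$, hence non-zero. Everything therefore reduces to exhibiting some $m_0$ with $O^+(m_0) \ne \N$.

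Suppose for contradiction that $O^+(m) = \N$ for every $m \in \N$. The pairwise disjointness of the $S_l$'s gives each $k \in \N$ at most one \emph{predecessor} (an index $m$ with $k \in S_m$); the standing assumption forces every $k$ to have one, for otherwise $k \notin O^+(m_0)$ whenever $m_0 \ne k$. This defines a total map $\tau : \N \to \N$. A straightforward induction on path length yields
\[
k \in O^+(m) \iff m \in \{\tau^j(k) : j \geq 0\},
\]
so the hypothesis becomes: for every $k_0 \in \N$, the $\tau$-orbit $\{\tau^j(k_0) : j \geq 0\}$ equals $\N$. If $k_0$ is $\tau$-periodic this orbit is finite, contradicting $|\N| = \infty$; otherwise all $\tau^j(k_0)$ are distinct, so this set is an enumeration of $\N$ starting at $k_0$, while applying the hypothesis at $\tau(k_0)$ also forces $\{\tau^j(k_0) : j \geq 1\} = \N$, which by distinctness omits $k_0$. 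Either way we reach a contradiction.

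The main obstacle is this last dynamical step. Once Proposition \ref{matrix representation} is in hand, the construction of the forward orbits and the verification that each $I_{O^+(m_0)}$ is an invariant ideal are essentially bookkeeping; the delicate point is transferring the problem from the action of $T$ on the basis to the combinatorics of the predecessor map $\tau$ on $\N$, and exploiting the elementary observation that $\N$ cannot be exhaustively enumerated by two distinct $\tau$-orbits that differ only in their starting index.
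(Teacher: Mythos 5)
Your proof is correct and rests on the same core mechanism as the paper's own argument: coordinate ideals determined by orbits of the index dynamics on $\N$, with the pairwise disjointness of the supports $S_m$ (Proposition \ref{matrix representation}) as the key input, and the elementary observation that the orbits of a single self-map of $\N$ cannot all exhaust $\N$. The only difference is organizational: the paper splits into cases (non-injective; injective with dense range, via the weighted permutation representation; injective without dense range, via the predecessor map $\Fi$, which is exactly your $\tau$), whereas your forward-orbit set $O^+(m_0)$ is just the complement of a $\tau$-forward-closed set, so your unified construction yields the same family of invariant ideals without the case analysis.
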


\begin{proof}
First, observe that if $T$ is not injective, then the kernel of $T$ is a non-trivial closed invariant subspace; in fact, it is a closed invariant ideal since $T$ is a lattice homomorphism.

\smallskip

If $T$ is injective and has dense range then, by means of Proposition \ref{representacion2}, $T$ is a weighted permutation operator.
Let us denote by  $\xi : \N \rightarrow \N$ the permutation associated to $T$.

\smallskip

For $k, n \in \N$, let us consider the subset of $\N$
\begin{equation*}
A_k(n)= \{ \xi^{-(k+m)}(n) \in \N : m \in \N \}=\{\xi^{-(k+1)}(n), \, \xi^{-(k+2)}(n),\, \xi^{-(k+3)}(n),\, \dots \}.
\end{equation*}
That is, $A_k(n)$ is the orbit of $n$ under the sequence of maps $\{\xi^{-(k+m)}\}_{m\in \N}$.

\smallskip

First, let us show that $A_k(n) \subsetneq \N$ arguing by contradiction. If $A_k(n) = \N$,  then there exists $m \in \N$ such that $\xi^{-(k+m)}(n) = n$. In particular, in such a case, the permutation $\xi^{-1} : \N \rightarrow \N$ would be cyclic and $A_k(n)$ finite which contradicts the assumption.

Let us consider the ideal
\begin{equation} \label{idealdefinicion0}
I_{A_k(n)} = \left \{ x \in \mathcal{X}:\, x=\sumn x_n e_n \mbox{ with } x_m = 0 \ \text{for every} \ m \in A_k(n) \right \}.
\end{equation}
Since $A_k(n) \subsetneq \N$ we deduce that $I_{A_k(n)}$ is a non-trivial ideal. Moreover, $I_{A_k(n)}$ is invariant under $T$: note that if $x \in I_{A_k(n)}$ the coordinates of $Tx$ are given by
$$(Tx)_{\xi^{-(k+m)+1}(n)}$$
which are null for every $m \in \N$. Accordingly, $Tx \in I_{A_k(n)}$ and therefore, every injective and dense range $T$ homomorphism has a non-trivial closed invariant ideal and hence, a non-trivial invariant-subspace.

\medskip

In order to finish the proof, we are reduced to prove the result for injective lattice homomorphisms $T$ with no dense range. The aim is to show the existence of non-trivial closed invariant ideals, since clearly the closure of the range of $T$ is a non-trivial closed invariant subspace.

\smallskip

Denote by $A = (a_{n,m})_{n,m \in \N}$ the infinite positive matrix induced by $T$.  Since $T$ is injective, every column of $A$ has a strictly positive element. Assume there exists $n_0$ such that the $n_0$-th row of $A$ is null. Then, the non-trivial ideal
$$I_{n_0} = \{ x \in \mathcal{X}:\, x=\sumn x_n e_n \mbox{ with }  x_{n_0}=  0 \}$$
is clearly closed and invariant under $T$.  Hence, we may assume that every row and column of $A$ is not null.

\smallskip

Now, note that for each $n \in \N$ there exists $N_n \subset \N$ such that
$$Te_n = \sum_{k \in N_n} a_{k,n}e_k,$$
where $a_{k,n} > 0$ for every $k \in N_n$. Indeed, $N_n \neq \emptyset$ for every $n \in \N$ and $N_n\cap N_m = \emptyset $ if $n\neq m$. Moreover,  $$\N = \bigcup_{n=1}^\infty N_n.$$
These properties allow us to define the following map $\Fi : \N \rightarrow \N$. For each $k \in \N$ there exists a unique $n \in \N$ such that $k  \in N_n$. We define $\Fi(k) = n.$ Observe that $\Fi$ is a well-defined, surjective map.

\smallskip

Now, for $k, n \in \N$ we define the subset of $\N$
\begin{equation*}
\tilde{A}_k(n) = \{ \Fi^{k+m}(n) : m \in \N\}=\{\Fi^{k+1}(n), \, \Fi^{k+2}(n),\, \Fi^{k+2}(n),\, \dots \}.
\end{equation*}
In this case, $\tilde{A}_k(n)$ is the orbit of $n$ under the sequence of maps $\{\Fi^{k+m}\}_{m\in \N}$.

\medskip

In a similar way as before, let us show that $\tilde{A}_k(n) \subsetneq \N$ arguing by contradiction. If $\tilde{A}_k(n) = \N$, then there would exist $m_0 \in \N$ such that $\Fi^{k+m_0}(n) = n$. Thus, $\Fi$ would be cyclic and $\tilde{A}_k(n)$ finite, which is again a contradiction.

\smallskip

Let us consider the ideal $I_{\tilde{A}_k(n)}$ associated to $\tilde{A}_k(n)$ as in \eqref{idealdefinicion0}, namely,
\begin{equation*} 
I_{\tilde{A}_k(n)} = \left \{ x \in \mathcal{X}:\, x=\sumn x_n e_n \mbox{ with } x_m = 0 \ \text{for every} \ m \in \tilde{A}_k(n) \right \}.
\end{equation*}

Once again $I_{\tilde{A}_k(n)}$ is a non-trivial ideal of $\mathcal{X}$  since $\tilde{A}_k(n) \subsetneq \N$. The proof of the Theorem will follow if we show that $I_{\tilde{A}_k(n)}$ is invariant under $T$. In this case, the argument is a bit more involved.

Let $x \in I_{\tilde{A}_k(n)}$ and write $$Tx = \sumn x_n \left( \sum_{k \in N_n} a_{k,n}e_k\right).$$ Let $p \in \tilde{A}_k(n)$ and let us show $(Tx)_p = 0$. Having in mind that lattice homomorphisms maps different basis elements into expansions with disjoint basis elements, we deduce that
$$
(Tx)_p = x_{\ell} a_{p,\ell},
$$
where $\ell \in \N$. Then, it follows that $p \in N_{\ell}, $ so $\Fi(p) = l.$

Now, since $p \in \tilde{A}_k(n)$, there exists $m \in \N$ such that $p = \Fi^{k+m}(n)$, so $\ell = \Fi(p) = \Fi^{k+m+1}(p)$. Then, $\ell \in \tilde{A}_k(n)$ and since $x \in I_{\tilde{A}_k(n)}$ it follows that $x_{\ell} = 0$. Then $(Tx)_p = 0$ and accordingly, $Tx \in I_{\tilde{A}_k(n)}$. Therefore, $I_{\tilde{A}_k(n)}$ is invariant under $T$ as we wish to prove.
This completes the last part of the proof and the statement of the theorem follows.
\end{proof}

As an immediate corollary we deduce the following result.

\begin{corollary}
Every lattice homomorphism on $\ell^p$,  $1\leq p<  \infty$, has a non-trivial closed invariant subspace which is an ideal.
\end{corollary}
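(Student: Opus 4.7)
The plan is essentially to invoke Theorem \ref{lattice invariant} after checking that its hypotheses are met in the case $\mathcal{X}=\ell^p$. First I would observe that for $1\leq p<\infty$ the canonical basis $\{e_n\}_{n\geq 1}$ of $\ell^p$ is a Schauder basis which is moreover unconditional (every permutation of $\{\alpha_n e_n\}$ converges to the same sum, since $\|\sum \alpha_n e_n\|_p^p = \sum |\alpha_n|^p$ is rearrangement-invariant). Thus $\ell^p$ is a Banach space of the type considered in Section \ref{sec3}.

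Next I would check that the abstract notion of positivity/lattice homomorphism used throughout the paper (the one induced by the cone $\mathcal{C}_{\mathcal{E}}$ of vectors with nonnegative coordinates in the basis $\mathcal{E}$) coincides, for $\ell^p$ with its canonical basis, with the usual Banach-lattice structure of coordinatewise order. Indeed, $\mathcal{C}_{\mathcal{E}}$ in this setting is precisely the standard positive cone $\{x\in\ell^p : x_n\geq 0 \text{ for all } n\}$, and the lattice operations $\vee,\wedge$ induced by this cone agree with the coordinatewise maximum and minimum. Hence a lattice homomorphism on $\ell^p$ in the classical sense is exactly a lattice homomorphism with respect to the order induced by $\mathcal{E}$.

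With these two observations in hand, the corollary follows immediately: Theorem \ref{lattice invariant} applied with $\mathcal{X}=\ell^p$ and $\mathcal{E}$ the canonical basis guarantees a non-trivial closed invariant ideal (and therefore, in particular, a non-trivial closed invariant subspace) for any lattice homomorphism $T$ on $\ell^p$. There is no real obstacle here; the only thing to verify carefully is the compatibility of the two notions of lattice structure, which is routine.
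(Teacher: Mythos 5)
Your proposal is correct and matches the paper's approach: the corollary is stated there as an immediate consequence of Theorem \ref{lattice invariant}, obtained by noting that the canonical basis of $\ell^p$ is unconditional and that the induced order is the usual coordinatewise one. Your extra care in checking the compatibility of the two lattice structures is sound but routine, exactly as you say.
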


In addition, recall that a positive operator $T$ in a Riesz space is called \emph{interval preserving} whenever $T[0,x]=[0,\, Tx]$ for every positive element $x$. As mentioned in \cite[p. 24]{AA-book}, there are some nice duality properties between interval preserving operators and lattice homomorphisms. Namely, $T$ is a lattice homomorphism if and only if its adjoint $T^*$ is interval preserving.
Accordingly, we have the following

\begin{corollary}
Every interval preserving operator $T$ on $\ell^p$,  $1< p <\infty$, has a non-trivial closed invariant subspace which is an ideal.
\end{corollary}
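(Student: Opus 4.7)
The plan is to reduce the statement to the previous corollary via duality. Since $1<p<\infty$, the dual of $\ell^p$ is $\ell^q$ with $1/p+1/q=1$, which is again a separable Banach space with an unconditional (canonical) basis. The adjoint $T^*$ acts boundedly on $\ell^q$, and by the duality recalled just above the corollary (namely, $T$ is interval preserving if and only if $T^*$ is a lattice homomorphism), $T^*$ is a lattice homomorphism on $\ell^q$.

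Applying the previous corollary to $T^*$, I obtain a non-trivial closed $T^*$-invariant ideal $J\subseteq \ell^q$. Inspecting the proof of Theorem \ref{lattice invariant}, $J$ can be taken of the form $I_A=\{y\in\ell^q:\;y_n=0 \text{ for every } n\in A\}$ for some $\emptyset\neq A\subsetneq \N$ (this is the form of every closed ideal in $\ell^q$ with respect to the canonical basis, since the basis vectors are atoms and $\ell^q$ has order continuous norm). I then propose to take the pre-annihilator
\[
J_\perp=\{x\in\ell^p:\;\langle x,y\rangle=0 \ \text{for every}\ y\in J\},
\]
which, by the explicit description of $J$ as $I_A$, equals $\{x\in\ell^p:\;x_n=0\ \text{for every}\ n\in \N\smallsetminus A\}$; this is itself a non-trivial closed ideal of $\ell^p$ (an $I_{\N\smallsetminus A}$ in $\ell^p$).

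Finally, a standard duality argument shows $J_\perp$ is $T$-invariant: for $x\in J_\perp$ and $y\in J$ we have $\langle Tx,y\rangle=\langle x,T^*y\rangle=0$, since $T^*y\in J$. Hence $Tx\in J_\perp$, and $J_\perp$ is a non-trivial closed invariant ideal for $T$, as required. The only delicate point, which I would spell out carefully, is the identification of closed ideals in $\ell^q$ with coordinate ideals $I_A$; this justifies both that $J$ may be taken in that form and that its pre-annihilator is again an ideal (not merely an invariant subspace). Everything else is a routine application of the already established Theorem \ref{lattice invariant} and of the duality between lattice homomorphisms and interval preserving operators.
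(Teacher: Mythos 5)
Your proposal is correct and is precisely the argument the paper intends (the corollary is stated there without proof as an immediate consequence of the duality between interval preserving operators and lattice homomorphisms together with the preceding corollary): pass to the adjoint $T^*$ on $\ell^q$, apply the lattice-homomorphism result to get a coordinate ideal $I_A$, and pull back via the pre-annihilator. Your extra care in identifying closed ideals of $\ell^q$ with coordinate ideals $I_A$ is exactly the right point to make explicit, since it is what guarantees the pre-annihilator is again an ideal and not merely an invariant subspace.
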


Lastly, as far as Theorem \ref{lattice invariant} concerns,  we borrow Example 3.5 in \cite{KW} which shows that the result is sharp in the sense that there exist positive operators on $\ell^p$, $1\leq p<\infty$, lacking non-trivial closed invariant sublattices, and hence ideals.

\begin{example}\label{Example1}
Let $X=\ell^p$, $1\leq p<\infty$, and $T$ be the operator defined on $X$ by
$$
(Tx)_n=\left \{
\begin{array}{ll}
x_{n-1}+x_{n+1} & \mbox{ if } n>1,\\
x_2 &  \mbox{ if } n=1.
\end{array}
\right.
$$
There is no non-trivial invariant sublattices of $T$ in $X$.
\end{example}

Indeed, a closer look at Example \ref{Example1} yields that the matrix representation of $T$ is a tridiagonal matrix  where each row (except the first one) has exactly two non-zero entries. This contrasts with the fact that each row of the matrix representation of lattice homomorphisms  has at most one non-zero entry (as proved in Theorem \ref{matrix representation}). This seems to play the key role in the proof of Theorem \ref{lattice invariant}.

\smallskip

Regarding positive operators with tridiagonal matrix, we will see in Section \ref{sec4} that it is possible to provide a characterization of such operators having non-trivial closed invariant ideals.

\medskip

\subsection*{A final remark} In order to conclude Section \ref{sec3}, we observe that the hypotheses on the Banach space $\mathcal{X}$ regarding the order inherited by the unconditional basis is essential and plays a significant role. Indeed, by considering the classical Lebesgue spaces $L^p[0,1]$, $1\leq p<\infty$, one observes that when they are considered as standard Banach lattices, there do exist lattice homomorphisms without non-trivial closed invariant sublattices, and hence ideals  (see \cite[Section 4]{KW}, for instance). Indeed, among the lattice homomorphisms lacking closed invariant sublattices provided by Kitover and Wickstead are certain \emph{Bishop operators}. Recall that if $\alpha$ is an irrational number in the interval $(0,1)$ and $\{\, \cdot \, \}$ stands for the fractional part, the Bishop operator $T_{\alpha}$ is defined by
\begin{align*}
T_\alpha : L^p[0,1] & \longrightarrow L^p[0,1] \\
u(t) & \longmapsto t \cdot u(\{t+\alpha\}).
\end{align*}
Bishop operators were proposed by E. Bishop in the fifties as candidates for operators having no non-trivial closed invariant subspaces, or in other words, operators which might entail  counterexamples for the Invariant Subspace Problem. In 1973, Davie \cite{DAVIE_inv_subs_bishop} showed that there exist Bishop operators having non-trivial hyperinvariant closed subspaces for almost all $\alpha$ (actually, whenever $\alpha$ is a non-Liouville number). It is still an open problem whether all Bishop operators have non-trivial closed invariant subspaces and we refer to \cite{authors2_chamizo_ubis} for results enlarging the class of irrationals $\alpha$'s such that the corresponding Bishop operator $T_{\alpha}$ is known to have non-trivial closed invariant subspaces (see also \cite{Gallardo-Monsalve} for a recent survey on the subject).

\section{Invariant Ideals For Positive Tridiagonal Operators}\label{sec4}

This section is mainly motivated by the operator $T$ in Example \ref{Example1} in the context of the existence of non-trivial closed invariant ideals. As we remarked, the  matrix representation of $T$ is a tridiagonal matrix (note that, in particular,  each row except the first one has exactly two non-zero entries) but $T$ lacks non-trivial closed invariant ideals. As we pointed out, this contrasts with the fact that each row of the matrix representation of lattice homomorphisms has at most one non-zero entry and they do have non-trivial closed invariant ideals.
\smallskip

Our starting point is a theorem due to Grivaux \cite{G02}, which in our context reads as follows:

\begin{thmG}
Let $\mathcal{X}$ be a Banach space with an unconditional basis $\mathcal{E}=\{e_n\}_{n\geq 1}$. If $T$ is a positive operator whose matrix representation respect to  $\mathcal{E}$ is a tridiagonal matrix, then $T$ has a non-trivial closed invariant subspace.
\end{thmG}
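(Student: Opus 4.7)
The plan is to proceed by case analysis on the off-diagonal sparsity of $T$, dispose of the reducible cases by a direct construction, and attack the irreducible case via a Perron-Frobenius argument on finite truncations. Write
\[
T e_n = a_{n-1,n}\, e_{n-1} + a_{n,n}\, e_n + a_{n+1,n}\, e_{n+1},
\]
with the convention $a_{0,1}=0$. If $a_{n+1,n}=0$ for some $n\geq 1$, tridiagonality forces $T$ to map $\mathrm{span}\{e_1,\dots,e_n\}$ into itself, producing a finite-dimensional (hence closed) non-trivial invariant subspace; dually, if $a_{n-1,n}=0$ for some $n\geq 2$, then $\overline{\mathrm{span}\{e_k:k\geq n\}}$ is a proper closed invariant subspace. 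Both reducible cases are settled at once.

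It remains to treat the irreducible case, in which every sub- and superdiagonal entry is strictly positive. Let $P_N$ denote the basis projection onto $F_N=\mathrm{span}\{e_1,\dots,e_N\}$, and consider the compressions $T_N=P_NTP_N$, viewed as positive tridiagonal matrices on $F_N$. Each $T_N$ is irreducible in the Perron-Frobenius sense, hence has a simple Perron eigenvalue $\rho_N>0$ with a coordinate-wise positive eigenvector $v_N$; moreover $\rho_N\leq\rho_{N+1}\leq\|T\|$, so $\rho_N\to\rho$ for some $\rho\in(0,\|T\|]$. Normalising $v_N$ by $(v_N)_1=1$, the tridiagonal eigenvalue equation reduces to the three-term recurrence
\[
a_{n,n+1}\,x_{n+1}=(\rho-a_{n,n})\,x_n-a_{n,n-1}\,x_{n-1},\qquad x_0:=0,\ x_1:=1,
\]
which determines a unique candidate sequence $(x_n)_{n\geq 1}$; by continuous dependence on $\rho_N\to\rho$ and a diagonal extraction, $(v_N)_n\to x_n$ as $N\to\infty$ for each fixed $n$. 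The hope is that $x=\sum_n x_n e_n$ converges in $\mathcal{X}$ to a non-zero vector, whose linear span is then a one-dimensional $T$-invariant subspace.

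The delicate point is precisely this convergence in an arbitrary Banach space with unconditional basis. Coordinate-wise positivity of the $v_N$ combined with the unconditionality of $\mathcal{E}$ would allow one to control the norms of the partial sums $\sum_{n=1}^M x_n e_n$ provided one can dominate them, coordinate-wise, by a fixed multiple of a single $v_N$ with $N$ large. Such a uniform estimate can be read off from the recurrence when $\rho$ sits on the boundary of $\sigma(T)$; in general, however, it is the genuinely hard step, and one may have to replace $\rho$ by a different boundary point of the spectrum accessed through the resolvent of $T$ restricted to the positive cone, or invoke a diagonal similarity $D^{-1}TD$ symmetrising $T$ into a Jacobi form so as to appeal to Hilbert-space spectral theory. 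This uniform control of the partial sums is the step I expect to be the main obstacle; the rest of the argument is book-keeping.
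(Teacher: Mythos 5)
A preliminary remark: the paper does not actually prove this statement; it is quoted from Grivaux \cite{G02} as the starting point of Section \ref{sec4}, so the only internal material to compare with is Theorem \ref{theorem positivo}, which covers precisely your two ``reducible'' cases (and even strengthens them to invariant ideals). Those cases you handle correctly: if $a_{n+1,n}=0$ then $\mathrm{span}\{e_1,\dots,e_n\}$ is a non-trivial finite-dimensional invariant subspace, and if $a_{n-1,n}=0$ then $\overline{\mathrm{span}}\{e_k:k\geq n\}$ is a proper closed invariant subspace.

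The genuine gap is the irreducible case, which is the entire content of Grivaux's theorem, and the difficulty you flag there is not a missing estimate but a fatal obstruction to the strategy itself. An irreducible positive tridiagonal operator need not have any eigenvector at all: the operator of Example \ref{Example1} on $\ell^2$, i.e.\ the free Jacobi matrix $(Tx)_n=x_{n-1}+x_{n+1}$, has spectrum equal to the interval $[-2,2]$ with empty point spectrum; the Perron roots of its truncations satisfy $\rho_N\to 2$, and the three-term recurrence with $\rho=2$, $x_0=0$, $x_1=1$ yields $x_n=n$, which converges in no $\ell^p$. So the vector $x=\sum_n x_n e_n$ you hope to build simply does not exist in the space, for an operator squarely inside the class you must treat; no choice of boundary spectral point or diagonal symmetrisation can repair this, because the intended conclusion (a one-dimensional invariant subspace, i.e.\ an eigenvector) is false for that operator. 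Any correct proof of the irreducible case must produce an invariant subspace that is not an eigenspace, and Grivaux's argument in \cite{G02} indeed proceeds by an entirely different route. As written, the proposal establishes only the easy half of the theorem.
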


Initially, it is natural to ask whether these invariant subspaces can be chosen to be ideals. Nevertheless, as Radjavi and Troitsky showed in \cite[Proposition 5.9]{RT08}, if $Q$ is a linear bounded operator on $\mathcal{X}$ with infinite matrix
$$
Q = \left( \begin{matrix}  0 & * & 0 & 0 & 0 & \cdots \\
		* & 0 & * & 0 & 0 & \cdots \\
		0 & * & 0 & * & 0 & \cdots \\
		0 & 0 & * & 0 & * &  \cdots \\
		\vdots &\vdots &\vdots &\vdots &\vdots & \ddots
		\end{matrix}\right) $$
where $*$ are positive real numbers,  $Q$ has no non-trivial closed invariant ideals. As particular instance, Example \ref{Example1} fits in this scheme.
Indeed, a straightforward consequence of \cite[Proposition 5.9]{RT08} in this context is the following result.

\begin{proposition}\label{tridiagonales} Let $\mathcal{X}$ be a Banach space with an unconditional basis $\mathcal{E}=\{e_n\}_{n\geq 1}$ and $T$ a positive operator whose matrix representation respect to  $\mathcal{E}$ is a tridiagonal matrix $A = (a_{n,m})_{n,m \in \N}$. Assume that both the sub-diagonal $(a_{n+1, n})_{n \in \N}$ and the super-diagonal $(a_{n, n+1})_{n \in \N}$ of $A$ do not have null elements. Then, $T$ has no non-trivial closed invariant ideals.
\end{proposition}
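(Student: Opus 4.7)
The plan is to reduce the invariance of closed ideals under $T$ to a combinatorial condition on subsets of $\N$. A first step is to describe the closed ideals of $\mathcal{X}$ with respect to the lattice structure induced by $\mathcal{E}$: every closed ideal $I \subseteq \mathcal{X}$ has the form
\begin{equation*}
I_S \;=\; \overline{\Span\{e_n : n \in S\}}, \qquad S \subseteq \N.
\end{equation*}
Indeed, if $x = \sumn x_n e_n \in I$ and $x_{n_0} \neq 0$, then $|x_{n_0} e_{n_0}| \leq |x|$ in the induced order (unconditionality yields $|x| = \sumn |x_n|\, e_n$), so $e_{n_0} \in I$. Setting $S = \{n : e_n \in I\}$ recovers $I = I_S$, and conversely each $I_S$ is a closed ideal.

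Next I would translate $T I_S \subseteq I_S$ into a condition on $S$. Since $A$ is tridiagonal,
\begin{equation*}
T e_n = a_{n-1, n}\, e_{n-1} + a_{n, n}\, e_n + a_{n+1, n}\, e_{n+1} \qquad (n \in \N),
\end{equation*}
with the convention $a_{0,1} := 0$. Hence $T I_S \subseteq I_S$ amounts to requiring, for every $n \in S$, that $a_{n-1, n} = 0$ or $n-1 \in S$, and that $a_{n+1, n} = 0$ or $n+1 \in S$; the diagonal entry $a_{n, n}$ imposes no restriction since $e_n \in I_S$ already.

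Invoking the hypothesis that $(a_{n+1, n})_{n \in \N}$ and $(a_{n, n+1})_{n \in \N}$ have no null terms, the combinatorial condition collapses to: $n \in S$ implies $n+1 \in S$, and also $n-1 \in S$ whenever $n \geq 2$. A straightforward induction then propagates any element of $S$ to all of $\N$, forcing $S = \emptyset$ or $S = \N$. Consequently only the trivial ideals $I_{\emptyset} = \{0\}$ and $I_{\N} = \mathcal{X}$ can be invariant under $T$, and the statement follows. I anticipate no substantial obstacle: the mildly delicate point is the identification of closed ideals with subsets of $\N$, but this rests on the unconditionality of $\mathcal{E}$ and the explicit form of the modulus in the induced order. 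Alternatively, the result may be deduced directly from \cite[Proposition 5.9]{RT08}, whose argument is insensitive to the values of the diagonal entries and depends only on the non-vanishing of the off-diagonal coefficients.
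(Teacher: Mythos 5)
Your argument is correct, but it follows a genuinely different route from the paper. The paper's proof is a short reduction: it writes $T=Q+D$ with $D e_n=a_{n,n}e_n$ the diagonal part, observes that $D$ is central (it preserves every ideal), so any closed ideal invariant under $T$ would also be invariant under the zero-diagonal tridiagonal positive operator $Q=T-D$, contradicting \cite[Proposition 5.9]{RT08} — exactly the alternative you mention in your last sentence. What you do instead is give a self-contained proof: you first classify the closed ideals of $\mathcal{X}$ as the sets $I_S=\overline{\Span\{e_n:n\in S\}}$ for $S\subseteq\N$ (which is correct, using $|x|=\sumn |x_n|e_n$ and the coordinatewise order, and is the same description the paper uses implicitly for the ideals $I_{A_k(n)}$ in Theorem \ref{lattice invariant} and in the quoted Radjavi--Troitsky Proposition \ref{troitsky}), and you then translate $TI_S\subseteq I_S$ into the combinatorial condition that $S$ is closed under $n\mapsto n\pm 1$ wherever the corresponding off-diagonal entry is non-zero; under the hypothesis this forces $S=\emptyset$ or $S=\N$. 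Your version buys independence from the external reference — in effect you reprove the relevant case of \cite[Proposition 5.9]{RT08} together with the observation that the diagonal entries are irrelevant — at the cost of a slightly longer argument; the paper's version buys brevity by outsourcing the combinatorics to Radjavi and Troitsky and isolating the diagonal via the notion of a central operator. Both are sound.
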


\begin{proof}
Let us denote by $D$ the diagonal operator defined on $\mathcal{E}$ by $D e_n = a_{n,n} e_n$. Let $Q$ the linear operator defined by $Q= T-D$.  Assume, arguing by contradiction, that $T$ has a non-trivial invariant ideal $I$. Observe that $D$ is a central operator, namely, $D$ leaves invariant every ideal of $\mathcal{X}$. Then, $I$ is invariant under $T-D = Q$ which contradicts \cite[Proposition 5.9]{RT08}.
\end{proof}

In addition, as a direct application of Theorem \ref{lattice invariant} in this context, we obtain tridiagonal operators having non-trivial closed invariant ideals.

\begin{proposition}
Let $\mathcal{X}$ be a Banach space with an unconditional basis $\mathcal{E}=\{e_n\}_{n\geq 1}$ and $T$ a positive operator whose matrix representation respect to  $\mathcal{E}$ is a tridiagonal matrix $A = (a_{n,m})_{n,m \in \N}$. Assume  $a_{n,n+1}\, a_{n+1,n} = 0$ for every $n \in \N$. Then, $T$ has a non-trivial closed invariant ideal.
\end{proposition}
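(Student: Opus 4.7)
The plan is to exploit the natural splitting of the tridiagonal operator into its diagonal and off-diagonal parts and isolate a lattice-homomorphism piece to which Theorem~\ref{lattice invariant} applies. Concretely, I would write $T = D + Q_+ + Q_-$, where $D$ is the diagonal operator ($De_n = a_{n,n}e_n$), $Q_+$ is the operator given by the super-diagonal entries ($Q_+e_m = a_{m-1,m}e_{m-1}$), and $Q_-$ is the operator given by the sub-diagonal entries ($Q_-e_m = a_{m+1,m}e_{m+1}$). As already used in the proof of Proposition~\ref{tridiagonales}, $D$ is central and leaves every ideal of $\mathcal{X}$ invariant, so the problem reduces to finding a non-trivial closed ideal invariant under $Q_+ + Q_-$. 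Moreover, the matrices of $Q_+$ and $Q_-$ each have at most one non-zero entry per row, so Theorem~\ref{matrix representation} says that $Q_+$ and $Q_-$ are themselves lattice homomorphisms.

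I would then split the argument according to the behaviour of $Q_-$. If $Q_- = 0$, then $T = D + Q_+$ and Theorem~\ref{lattice invariant} applied to the lattice homomorphism $Q_+$ provides a non-trivial closed ideal invariant under $Q_+$, and hence, by centrality of $D$, under $T$. Symmetrically, if $Q_-$ is injective (equivalently, $a_{m+1,m} > 0$ for every $m$), then the hypothesis $a_{n,n+1}\,a_{n+1,n}=0$ forces every $a_{m,m+1}$ to vanish, so $Q_+ = 0$ and Theorem~\ref{lattice invariant} now applied to $Q_-$ finishes the proof.

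The substantive case is when $Q_-$ is neither zero nor injective. In this situation, $I := \ker Q_-$, that is, the closed linear span of $\{e_m : a_{m+1,m} = 0\}$, is a proper non-zero closed ideal of $\mathcal{X}$, and the crucial step is to verify that $I$ is $T$-invariant. For any $m$ with $a_{m+1,m}=0$ one has $Te_m = a_{m-1,m}e_{m-1} + a_{m,m}e_m$, so the only delicate point is that $e_{m-1}\in I$ whenever $a_{m-1,m}>0$. But the hypothesis applied at $n=m-1$ then gives $a_{m-1,m}\,a_{m,m-1}=0$, forcing $a_{m,m-1}=0$, which is precisely the condition placing $e_{m-1}$ in $I$. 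I expect this last verification to be the only genuinely delicate point; beyond it, the argument reduces smoothly to Theorem~\ref{lattice invariant} together with the centrality of $D$.
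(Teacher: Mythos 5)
Your proof is correct, and while it shares the paper's starting point, the substantive case is handled by a genuinely different argument. The paper also removes the central diagonal part $D$ and sets $Q=T-D$, but then runs a dichotomy: under the hypothesis $a_{n,n+1}\,a_{n+1,n}=0$, either some column of $Q$ vanishes (so $Te_j=a_{j,j}e_j$ and the span of $e_j$ is a one-dimensional invariant ideal), or every row of $Q$ has at most one positive entry, so that $Q$ itself is a lattice homomorphism by Proposition \ref{matrix representation} and Theorem \ref{lattice invariant} applies to $Q$ directly. You instead split $Q=Q_++Q_-$ and run a trichotomy on $Q_-$; in the mixed case you bypass Theorem \ref{lattice invariant} altogether by exhibiting the explicit closed ideal $\ker Q_-$, the closed linear span of $\{e_m : a_{m+1,m}=0\}$, and verifying its $T$-invariance by hand --- and your identification of the delicate point is exactly right: the hypothesis at $n=m-1$ is what forces $a_{m,m-1}=0$ and hence $e_{m-1}\in\ker Q_-$. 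What the paper's route buys is brevity, since one application of the general theorem covers every nondegenerate configuration (at the price of the small implicit verification that a row of $Q$ with two positive entries forces the corresponding column to vanish); what your route buys is a completely explicit invariant ideal in the main case and a much lighter reliance on Theorem \ref{lattice invariant}, which you invoke only for the weighted-shift-type homomorphisms $Q_\pm$ in the two degenerate cases --- and even there the ideals spanned by $e_1$, respectively by $\{e_m : m\ge 2\}$, could be written down directly. Both arguments are sound.
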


\begin{proof}
Consider, as in Proposition \ref{tridiagonales}, the diagonal operator $D$  defined on $\mathcal{E}$ by $D e_n = a_{n,n} e_n$ and $Q = T-D$. Because of the hypothesis $a_{n,n+1}\, a_{n+1,n} = 0$ for every $n \in \N$, either some column is zero or every row of the matrix of $Q$ has at most one positive element. In the former case, for some $j\in\mathbb N$ we have $T e_j=a_{j,j} e_j$, which yields that the subspace generated by $e_j$, which is in fact an ideal, is $T$-invariant. In the latter case, $Q$ is a lattice homomorphism and accordingly, it has a non-trivial closed invariant ideal. Since $D$ is a central operator, $T = Q+D$ has a non-trivial closed invariant ideal, and the proposition follows.
\end{proof}

Last result can be pushed a bit further. In order to state the main result of the section in this sense, we recall a result proved by Radjavi and Troitsky \cite[Proposition 1.2]{RT08} originally stated for operators acting on $\ell^p$ but it can be easily reformulated in the following terms:

\begin{proposition-RT}
Let $\mathcal{X}$ be a Banach space with an unconditional basis $\mathcal{E}=\{e_n\}_{n\geq 1}$ and $T$ a positive operator. Then, $T$ has no non-trivial invariant ideals if and only if for every $i \neq j \in \N$ there exist $n \in \N$ such that $(T^n e_i)_j > 0.$ \label{troitsky}
\end{proposition-RT}

\begin{theorem}\label{theorem positivo}
Let $\mathcal{X}$ be a Banach space with an unconditional basis $\mathcal{E}=\{e_n\}_{n\geq 1}$ and $T$ a positive operator whose matrix representation respect to  $\mathcal{E}$ is a tridiagonal matrix $A = (a_{n,m})_{n,m \in \N}$. Assume there exists a null element either in the sub-diagonal $(a_{n+1, n})_{n \in \N}$ or in  the super-diagonal $(a_{n, n+1})_{n \in \N}$ of $A$. Then, $T$ has a non-trivial closed invariant subspace which is an ideal.
\end{theorem}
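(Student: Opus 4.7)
The plan is a short case analysis that exploits the vanishing hypothesis to produce an explicit $T$-invariant closed ideal. The underlying observation is that, for the lattice structure on $\mathcal{X}$ induced by the unconditional basis $\mathcal{E}$, any subspace of the form $\overline{\mathrm{span}}\{e_n : n \in S\}$ with $S \subseteq \N$ is automatically a closed ideal: if $y = \sum_{n \in S} \beta_n e_n$ and $|x| \leq |y|$, then unconditionality forces the coefficients of $x$ outside $S$ to vanish, so $x$ belongs to the same closed span. The task then reduces to pointing out which ``basis segment'' the vanishing entry makes invariant.

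Fix $n_0 \in \N$ at which either $a_{n_0+1,n_0}=0$ or $a_{n_0,n_0+1}=0$, and split into two cases. In the first case, where $a_{n_0+1,n_0}=0$, I would take the finite-dimensional (hence closed) ideal
$$I_0 = \mathrm{span}\{e_1,\ldots,e_{n_0}\},$$
which is proper since $e_{n_0+1}\notin I_0$. For $1\leq m < n_0$ the tridiagonal form gives $Te_m = a_{m-1,m}e_{m-1}+a_{m,m}e_m+a_{m+1,m}e_{m+1}\in I_0$ (with the convention $a_{0,1}=0$), and for $m=n_0$ the vanishing hypothesis kills the $e_{n_0+1}$-term, so $Te_{n_0}\in I_0$. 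Hence $T(I_0)\subseteq I_0$.

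In the second case, where $a_{n_0,n_0+1}=0$, I would instead take the proper, non-zero closed ideal
$$J_0 = \overline{\mathrm{span}}\{e_n : n\geq n_0+1\}.$$
For $m\geq n_0+2$ the vector $Te_m$ lies in $\mathrm{span}\{e_{m-1},e_m,e_{m+1}\}\subseteq J_0$, while for $m=n_0+1$ the hypothesis eliminates the $e_{n_0}$-term, giving $Te_{n_0+1}=a_{n_0+1,n_0+1}e_{n_0+1}+a_{n_0+2,n_0+1}e_{n_0+2}\in J_0$. Since $T$ is bounded (positive operators on a Banach space with unconditional basis are automatically continuous), the invariance extends from the dense linear span of the $e_n$, $n\geq n_0+1$, to all of $J_0$.

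There is no substantial technical obstacle here: the real content is the identification of the correct ideal in each case, after which invariance reduces to reading off tridiagonality. The only point requiring care is the justification that ``basis segments'' are ideals rather than merely invariant subspaces, and for that the unconditionality of $\mathcal{E}$ is essential, since it is what allows the coordinate functionals to act independently and forces the absolute value to be compared coordinate by coordinate. I note moreover that this argument is a genuine converse to Proposition \ref{tridiagonales}: combined with it, one obtains the full characterization that a positive tridiagonal operator has a non-trivial closed invariant ideal precisely when either its sub- or super-diagonal contains a zero entry.
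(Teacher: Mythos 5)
Your proof is correct, and it takes a genuinely different --- and more direct --- route than the paper's. The paper never exhibits the invariant ideal explicitly: it shows by induction on $k$ that the entries of $A^k$ connecting the index block $\{1,\dots,n_0\}$ to the block $\{n_0+1,n_0+2,\dots\}$ vanish for every $k$, and then invokes the Radjavi--Troitsky criterion (a positive operator has no non-trivial closed invariant ideal if and only if for all $i\neq j$ some power satisfies $(T^n e_i)_j>0$) to conclude abstractly that an invariant ideal exists; moreover, it treats only the super-diagonal case and disposes of the sub-diagonal case by an appeal to the transpose matrix. You instead name the ideal in each case --- the finite-dimensional segment $\mathrm{span}\{e_1,\dots,e_{n_0}\}$ when $a_{n_0+1,n_0}=0$, and the tail $\overline{\mathrm{span}}\{e_n : n\geq n_0+1\}$ when $a_{n_0,n_0+1}=0$ --- and verify invariance by reading off the tridiagonal structure, which is exactly the content of the paper's induction specialized to $k=1$ plus the observation that invariance of a closed subspace need only be checked on generators. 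Your justification that closed spans of subsets of the basis are ideals (unconditionality forcing coordinatewise comparison of absolute values) is the one point requiring care, and you handle it correctly; ideals of precisely this form already appear in the proof of Theorem \ref{lattice invariant}. On balance your argument is more elementary, constructive, and symmetric in the two cases, while the paper's detour through the Radjavi--Troitsky proposition mainly serves to tie the result to the combinatorial criterion used throughout that section; your closing remark about the converse via Proposition \ref{tridiagonales} matches the corollary the paper draws.
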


\begin{proof}
Assume there exists a null element in the super-diagonal of $A$ , namely $a_{n_0,n_0+1} = 0$ for some $n_0$. The argument for the sub-diagonal of $A$ is similar and it is enough to consider the transpose matrix.

Let us show that the entries $(i,j)$ for $i \in \{1,...,n_0\}$ and $j \geq n_0+1$ are null in the matrix of $T^k$ for every $k \in \N$. As a consequence, we will have $(T^k e_i)_j = 0$ for every $k \in \N$ and the indicated indices, so by the previous Proposition the result will follow. We argue by induction on $k$.

Since $a_{n_0,n_0+1} = 0$ the induction hypothesis follows for $k=1$. Now, assume the hypothesis for $k \in \N$. Consider $A^{k+1}$ and let $i \in \{1,...,n_0\}$ and $j \geq n_0+1.$  Clearly  $A^{k+1} = A\,A^k$ and the $(i,j)$ element of the matrix $A^{k+1}$ is the product of the $i-$th row of $A$ by the $j-$th column of $A^k.$ Observe all the positive elements of the $i$-th row of $A$ are on the first $n_0$ coordinates of the vector. Moreover, by induction, the first $n_0$ coordinates of the $j$-th column of $A^k$ are null. Hence, the product of the $i$-th row and the $j$-th column is zero, and therefore the element $(i,j)$ of $A^{k+1}$ is zero, which finishes proof.
\end{proof}

As a byproduct of the previous results, we have the characterization for tridiagonal operators lacking non-trivial closed invariant ideals:

\begin{corollary}
Let $\mathcal{X}$ be a Banach space with an unconditional basis $\mathcal{E}=\{e_n\}_{n\geq 1}$ and $T$ a positive operator whose matrix representation respect to  $\mathcal{E}$ is a tridiagonal matrix $A$.  Then, $T$ has no non-trivial closed invariant ideals if and only if both the sub-diagonal and the super-diagonal of $A$ have no null elements.
\end{corollary}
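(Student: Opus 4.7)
The corollary is essentially a repackaging of the two structural results immediately preceding it, so my plan is to present it as a direct combination of Proposition \ref{tridiagonales} and Theorem \ref{theorem positivo}, splitting the biconditional into its two implications and invoking each statement on the relevant side.

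For the ``if'' direction, I would assume that both the sub-diagonal $(a_{n+1,n})_{n\in\N}$ and the super-diagonal $(a_{n,n+1})_{n\in\N}$ of $A$ have no null entries. This is exactly the hypothesis of Proposition \ref{tridiagonales}, whose conclusion is that $T$ admits no non-trivial closed invariant ideal. No extra work is required here.

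For the ``only if'' direction I would argue by contrapositive: suppose some entry of the sub-diagonal or of the super-diagonal vanishes. Then the hypothesis of Theorem \ref{theorem positivo} is satisfied, and that theorem produces a non-trivial closed invariant subspace of $T$ which, moreover, is an ideal. Hence $T$ does have a non-trivial closed invariant ideal, which contradicts our assumption and closes the contrapositive.

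There is no real obstacle in this proof; the content lives entirely in Proposition \ref{tridiagonales} (which uses the Radjavi--Troitsky result to rule out invariant ideals when the off-diagonals are both everywhere positive) and in Theorem \ref{theorem positivo} (which uses the induction on powers of $A$ to exhibit the invariant ideal when a zero appears off the main diagonal). The only thing to check when writing the proof is that the two results together cover all tridiagonal positive operators, i.e.\ that their hypotheses are exactly complementary, which is immediate: either both off-diagonals are strictly positive or at least one contains a zero. I would therefore keep the proof to a few lines, simply citing the two earlier results and noting that together they give the equivalence.
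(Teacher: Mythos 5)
Your proposal is correct and matches the paper's intent exactly: the corollary is stated there ``as a byproduct of the previous results,'' with the ``if'' direction given by Proposition \ref{tridiagonales} and the ``only if'' direction (via contrapositive) by Theorem \ref{theorem positivo}, and your observation that the two hypotheses are exactly complementary is all that needs to be checked.
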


\end{document}